\newtheorem{theo}{Theorem}[section]
\newtheorem{prop}[theo]{Proposition}
\newtheorem{lemma}[theo]{Lemma}
\newcommand{\GG}{{\cal G}}
\begin{document}
\date{}

\title{Connectivity Graph-Codes}

\author{Noga Alon
\thanks{Princeton University,
Princeton, NJ, USA 
and
Tel Aviv University, Tel Aviv,
Israel.
Email: {\tt nalon@math.princeton.edu}.
Research supported in part by
NSF grant DMS-2154082 and by USA-Israel BSF grant 2018267.}
}

\maketitle
\begin{abstract}
The symmetric difference of two graphs $G_1,G_2$ on the same set of
vertices $V$ is the graph on $V$ whose set
of edges are all edges that belong to exactly one of the two graphs
$G_1,G_2$.  For a fixed graph $H$ call a collection $\GG$ of
spanning subgraphs of $H$ a connectivity code for $H$ if the
symmetric difference of any two distinct subgraphs in $\GG$ is a
connected spanning subgraph of $H$. It is easy to see that the maximum
possible cardinality of such a collection is at most 
$2^{k'(H)} \leq 2^{\delta(H)}$,
where $k'(H)$ is the edge-connectivity of $H$ and $\delta(H)$ is its
minimum degree. We show that equality holds for any $d$-regular
(mild) expander, and observe that equality does not hold in several
natural examples including any large cubic graph, the square of a long
cycle and products of a small
clique with a long cycle.
\end{abstract}

\section{Introduction}

The {\em symmetric difference} of two graph $G_1=(V,E_1)$ and $G_2=(V,E_2)$
on the same set of vertices $V$ is the graph $(V,E_1\oplus E_2)$
where $E_1 \oplus E_2$ is the
symmetric difference between $E_1$
and $E_2$, that is, the set of all edges that belong to exactly one
of the two graphs. 

An intriguing variant of the well studied 
theory of error correcting codes (see,
e.g., \cite{MS}) is the investigation of collections $\GG$ of graphs on the
set of vertices $V$ in which  the symmetric difference of every
distinct pair satisfies a prescribed property. The systematic study
of this topic was initiated in \cite{AGKMS}, see also \cite{Al2},
\cite {BGMW} for two recent subsequent papers. If all graphs in the
collection $\GG$ are subgraphs of a fixed graph $H$, and the property
considered is connectivity, we call $\GG$ a connectivity code for 
$H$. Let $m(H)$ denote the maximum possible cardinality of a
connectivity code for $H$. It is clear that no two distinct members
of such a code $\GG$ can have exactly the same intersection with the set of
edges of any nontrivial cut of $H$, implying that 
$m(H) \leq 2^{k'(H)}$, where $k'(H)$ is the edge-connectivity of
$H$. In \cite{AGKMS} it is shown that equality holds if $H$
is the complete graph $K_n$, that is, $m(K_n)=2^{n-1}$. In
\cite{BGMW} it is proved that equality holds also for the
$3$ by $3$ torus $C_3 \times C_3$. This is the (Cartesian) product of two
cycles of length $3$ in which two vertices are adjacent iff they
are equal in one coordinate and adjacent in the other. The edge
connectivity (and degree of regularity) here is $4$, and it
is shown in \cite{BGMW} that $m(C_3 \times C_3)=2^4=16$. 

Our main result in this note is that for any $d$-regular graph $H$
satisfying appropriate expansion properties, 
$m(H)=2^d$. 
\begin{theo}
\label{t11}
There exists an absolute constant $c$ so that the following holds.
Let $d$ and $n$ be integers and let 
$H$ be a $d$-regular graph on $n$ vertices. Suppose that 
for every connected induced subgraph of $H$ on a set $W$ of 
$w \leq n/2$ 
vertices, there are at least $c w \log d$ edges connecting $W$ to
its complement. Then $m(H) =2^d$, that is, the maximum cardinality
of a connectivity code for $H$ is $2^d$.
\end{theo}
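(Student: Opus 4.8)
The plan is to realize the extremal code as a \emph{linear} one, which collapses the $\binom{|\GG|}{2}$ pairwise conditions into a single family of conditions indexed by the nonzero codewords. Identify spanning subgraphs of $H$ with vectors of $\mathbb{F}_2^{E(H)}$ and look for a $d$-dimensional subspace $C$ all of whose nonzero elements are connected spanning subgraphs; since the symmetric difference of two members of $C$ is again a nonzero member, such a $C$ is automatically a connectivity code, and $|C|=2^d$ meets the upper bound $m(H)\le 2^{k'(H)}=2^d$. (That $k'(H)=d$ follows from regularity: any $W$ with $w=|W|\le d/2$ has $|\partial W|=dw-2e(W)\ge w(d-w+1)\ge d$, while for $d/2<w\le n/2$ the hypothesis gives $|\partial W|\ge cw\log d\ge d$; singletons give $k'(H)\le d$.) Writing $C$ through columns $x_e\in\mathbb{F}_2^d$ ($e\in E(H)$), the codeword attached to $u\in\mathbb{F}_2^d$ is the subgraph $\{e:\langle u,x_e\rangle=1\}$, and it avoids the cut $\partial S$ precisely when $u$ is orthogonal to every $x_e$ with $e\in\partial S$. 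Since a subgraph is disconnected or non-spanning iff it avoids $\partial W$ for some connected $W$ with $1\le|W|\le n/2$ (take a smallest component as witness), the whole problem reduces to finding $x\colon E(H)\to\mathbb{F}_2^d$ so that for every such $W$ the vectors $\{x_e:e\in\partial W\}$ span $\mathbb{F}_2^d$.

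First I would dispose of the singleton cuts. For $|W|=1$ there are exactly $d$ boundary edges, so spanning means the $d$ incident vectors form a \emph{basis}: a rigid, local constraint that must hold at every vertex at once. I therefore would not sample the $x_e$ independently and uniformly, for under that law a star is a basis only with probability $\prod_{i\ge1}(1-2^{-i})\approx 0.29$, and singletons can then be controlled neither by a union bound nor by a naive Local Lemma. Instead I would draw $x$ from the uniform measure \emph{conditioned} on every vertex-star being a basis (equivalently, bake the basis structure in from a proper edge-colouring and inject the remaining randomness), so that every singleton condition holds with probability one.

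For the connected sets with $2\le|W|=w\le n/2$ I would invoke the Lovász Local Lemma, which is essential here because $n$ is unbounded and \emph{any} union bound carries a fatal factor of $n$. Let $B_W$ be the event that $\{x_e:e\in\partial W\}$ fails to span; a failure forces all boundary vectors into a hyperplane, so $\Pr[B_W]\le 2^{\,d-|\partial W|}$, and $B_W,B_{W'}$ interact only when $\partial W\cap\partial W'\neq\emptyset$, giving a local dependency structure. Using the sharp bound $|\partial W|\ge w(d-w+1)$ for $w\le d/2$, the hypothesis $|\partial W|\ge cw\log d$ for larger $w$, and the standard estimate $(ed)^w$ for the number of connected $w$-sets through a fixed edge, the asymmetric Local Lemma with weights $2^{-|\partial W|/2}$ reduces to one inequality of the shape $\sum_{w\ge2}(ed)^{w}2^{-|\partial W|/2}\le \tfrac14$. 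The small-$w$ terms are annihilated by the combinatorial bound (its exponent $(w-1)(w-d)$ is strongly negative already at $w=2$), and the large-$w$ terms by expansion once $c$ is a large enough absolute constant, so the inequality holds for all large $d$ with no dependence on $n$. This produces, with positive probability, an assignment for which every connected cut spans, completing the construction.

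The main obstacle is the interaction of the two regimes. The singleton cuts demand that every vertex-star be a basis, which independent randomness cannot deliver, whereas the Local Lemma for the larger cuts needs genuine local independence of the $x_e$. The crux is to reconcile these: to show that the measure conditioned on ``all stars are bases'' still makes each small-cut event exponentially unlikely and leaves the dependencies local enough for the Local Lemma to apply. The expander hypothesis is used only to tame the (easy) large cuts, while the delicate small cuts, sitting right at the edge-connectivity threshold $d$, are controlled by regularity together with the care taken in setting up the conditioned distribution.
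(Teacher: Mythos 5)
Your setup—realizing the code as a $d$-dimensional linear code, reducing to an assignment $x\colon E(H)\to \mathbb{F}_2^d$ whose boundary vectors span for every connected $W$ with $|W|\le n/2$, the bound $\Pr[B_W]\le 2^{d-|\partial W|}$, the $(ed)^w$ count of connected sets, and the asymmetric Local Lemma split into a regularity regime for small $w$ and the expansion hypothesis for large $w$—matches the paper's skeleton closely. The genuine gap is exactly where you yourself locate ``the crux'': the singleton cuts. Sampling from the uniform measure \emph{conditioned} on every vertex-star being a basis does not come with the properties your Local Lemma application needs, and you never establish them. The conditioning is global: each edge lies in two stars, so the constraints chain across the entire graph, the edge vectors are no longer independent, events $B_W, B_{W'}$ with disjoint boundaries need not be mutually independent, and your bound $\Pr[B_W]\le 2^{d-|\partial W|}$ is proved only under the product measure. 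Making this work would require a correlation-decay argument for the conditioned Gibbs measure—a substantial piece of mathematics that the proposal only names as what ``must be shown.'' The parenthetical fallback of baking in bases ``from a proper edge-colouring'' also fails in general: a $d$-regular graph need not be $d$-edge-colourable (Class 2 graphs exist, e.g.\ the Petersen graph), and even when it is, it is unclear what residual randomness that construction leaves to feed the Local Lemma.

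The paper resolves the tension by a different and simpler mechanism that your proposal is missing: do not ask the random assignment to produce bases at all. Using Petersen's theorem, pass to a spanning subgraph $H'$ in which every degree is about $d-3\log d$; assign independent uniform vectors in $\mathbb{F}_2^d$ only to the edges of $H'$; and include in the Local Lemma, alongside the cut events, the events $A(u)$ that the at most $d-3\log d$ vectors at $u$ are linearly \emph{dependent}. Because fewer than $d$ vectors are involved, $\Pr[A(u)]\le 2^{-3\log d}=d^{-3}$, which is small enough for the asymmetric Local Lemma—unlike the constant-probability full-basis event you rightly rejected. The loss of $3\log d+2$ boundary edges per vertex is absorbed by the expansion hypothesis (this is why the theorem asks for $cw\log d$ boundary edges). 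Finally, a greedy completion extends the partial assignment to all of $E(H)$ so that every star becomes a basis: when assigning a vector to an edge $uu'$, each endpoint forbids at most $2^{d-1}-1$ nonzero vectors, and $2(2^{d-1}-1)<2^d-1$, so a valid choice always exists; the nonsingleton cuts still span because they already contain the edges of $H'$ crossing them. This ``thin the graph, randomize, complete greedily'' idea is what closes the gap your conditioned-measure approach leaves open.
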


We also observe that for several natural examples of
$d$-regular graphs $H$ which are $d$-edge-connected, $m(H) \leq
2^{d-1}$. In particular, for all $t>36$,
$m(C_3 \times C_t) \leq 8$. This answers a problem suggested in
\cite{BGMW}.

The proofs are presented in the next sections. Throughout the note,
all logarithms are in base $2$, unless otherwise specified. To
simplify the presentation we omit all floor and ceiling signs,
whenever these are not crucial.

\section{Expanders}

In this section we prove Theorem \ref{t11}. We make no attempt to
optimize the absolute constant $c$ in the statement and the related
constants in the proofs. The code we construct is a linear code.
We  start with the following simple lemma.
\begin{lemma}
\label{l21}
Let $H=(V,E)$ be a graph. Assign each edge $e \in E$ a vector
$v(e) \in F$, where $F$ is a vector space of dimension $r$ over
$Z_2$. Suppose that for every cut $(S,V-S)=\{e \in E:
e \cap S \neq \emptyset, e \cap (V-S) \neq \emptyset\}$ the set
of vectors $\{v(e), e\in (S, V-S)\}$ spans $F$. Then 
$m(H) \geq 2^r$. 
\end{lemma}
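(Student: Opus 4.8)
The plan is to build an explicit linear code whose codewords are indexed by the vectors of $F$ and to read off connectivity directly from the spanning hypothesis. Fix a basis to identify $F$ with $Z_2^r$, equipped with the standard bilinear form $\langle\cdot,\cdot\rangle$. For each $y\in F$ define a spanning subgraph of $H$ by $C_y=\{e\in E:\langle y,v(e)\rangle=1\}$, and let $\GG=\{C_y:y\in F\}$. Since $\langle y+y',v(e)\rangle=\langle y,v(e)\rangle+\langle y',v(e)\rangle$, an edge $e$ lies in $C_{y+y'}$ exactly when it lies in precisely one of $C_y,C_{y'}$; thus $C_y\oplus C_{y'}=C_{y+y'}$ and $\GG$ is a linear subspace of the edge space $Z_2^E$. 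This is the ``dual'' description of the code: $\GG$ is the row space of the $r\times|E|$ matrix whose $e$-th column is $v(e)$.

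Then I would verify the two quantitative facts. First, the map $y\mapsto C_y$ is injective, so $|\GG|=2^r$: if $C_y=C_{y'}$ then $\langle y+y',v(e)\rangle=0$ for every $e\in E$, and since the vectors $v(e)$ already span $F$ (apply the hypothesis to any single cut), the nonzero standard form forces $y+y'=0$. Second, every nonzero codeword is a connected spanning subgraph. Here is where the hypothesis does the work, via the standard observation that a spanning subgraph $C$ of $H$ is connected if and only if it meets every nontrivial cut $(S,V-S)$: otherwise the vertex set of one of its connected components would furnish a cut containing no edge of $C$.

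So I would fix $y\neq0$ and an arbitrary cut $(S,V-S)$. By hypothesis the vectors $\{v(e):e\in(S,V-S)\}$ span $F$, so they cannot all be annihilated by the nonzero $y$; hence there is an edge $e\in(S,V-S)$ with $\langle y,v(e)\rangle=1$, that is $e\in C_y$. Thus $C_y$ meets every cut and is therefore a connected spanning subgraph. Combined with linearity this shows that for any two distinct members $C_y,C_{y'}\in\GG$ the symmetric difference $C_y\oplus C_{y'}=C_{y+y'}$ is connected, so $\GG$ is a connectivity code of size $2^r$ and $m(H)\ge 2^r$.

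The main obstacle, and really the only step with content, is the last one: converting the algebraic ``spanning'' condition into the combinatorial ``meets every cut.'' The clean way to see it is the contrapositive just used, namely that a single nonzero functional cannot kill a spanning set of vectors, so some cut edge is forced into $C_y$. Everything else (linearity of $y\mapsto C_y$, the count $2^r$, and the cut characterization of connectivity) is routine, which is consistent with the statement being flagged as a simple lemma.
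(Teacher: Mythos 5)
Your proposal is correct and is essentially the paper's own proof: both construct the same linear code $\{C_y : y \in F\}$ (the paper's $G_u$) via the inner product $\langle y, v(e)\rangle$ after fixing a basis, and both derive connectivity from the fact that a nonzero functional cannot annihilate a spanning set of cut vectors. Your only addition is spelling out injectivity of $y \mapsto C_y$ separately, which the paper gets for free since each symmetric difference is shown to be nonempty (indeed connected and spanning).
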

\begin{proof}
Choose a basis of $r$ vectors of $F$ and express each vector
$v(e)$ as a linear combination of the elements of this basis. In
this expression $v(e)$ is a vector in $Z_2^r$. For each 
vector $u \in Z_2^r$, let $G_u$ be the subgraph of $H$ consisting
of all edges $e \in E$ for which the inner product of $u$ and
$v(e)$ (over $Z_2$ ) is nonzero (that is, $1$). The symmetric
difference of any two distinct graphs $G_{u}$ and $G_{u'}$ consists
of all edges $e \in E$ for which the inner product of $v(e)$ with
the nonzero vector $u \oplus u'$ is nonzero. Since 
for every cut of $H$ the vectors
$v(e)$ for $e$ in the cut span $F$, this symmetric
difference must have at least one edge in each cut, implying it is
connected. 
\end{proof}

\noindent
{\bf Remark:}\, It is not difficult to see that the condition in 
the last lemma is equivalent to the existence of a {\em linear} 
connectivity code of size $2^r$ for $H$.  
This equivalence is not needed for our purpose here.

In order to prove Theorem \ref{t11} using the above lemma our
objective is to show that for any graph $H$ as in the theorem it is
possible to assign each edge $e$ a vector $v(e) \in Z_2^d$ so that
the vectors assigned to the edges of each cut of $H$ span $Z_2^d$.
In particular, the vectors assigned to all edges incident with any
single vertex must form a basis. The expansion properties of the
graph ensure that the cuts which are not $1$-vertex cuts have
significantly more edges than the $1$-vertex cuts, and hence it
seems intuitively simpler to ensure their vectors
span the whole space. The rigorous proof
is probabilistic, assigning vectors randomly to most 
(but not to all) of the edges
of $H$. Since, however, the probability that
$d$ random vectors of $Z_2^d$ form  a basis is exactly
$\prod_{i=1}^{d} (1-2^{-i})$, which is bounded away from $1$ 
(indeed smaller than $1/2$),
special care is needed to ensure that the vectors assigned to all
edges in every $1$-vertex cut form a basis. To do so, we do not
assign vectors randomly to all edges, but only to most of them, and
complete the assignment using the following lemma.
\begin{lemma}
\label{l22}
Let $H=(V,E)$ be a $d$-regular graph, let $E'$ be a subset of $E$
and let $v(e), e \in E'$ be an assignment of a vector in 
$Z_2^d$ for any edge $e \in E'$. Suppose that for every vertex $u$
the set of vectors $v(e)$ assigned to all edges in $E'$ that are
incident with $u$ is linearly independent. Then it is possible to
complete the given partial assignment  by assigning a vector
$v(e) \in Z_2^r$ to every edge $e \in E-E'$, so that for every vertex
$u$, the set of vectors $v(e)$ assigned to all $d$ edges incident
with it forms a basis of $Z_2^d$.
\end{lemma}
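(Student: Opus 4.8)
The plan is to build the extension greedily, assigning vectors to the edges of $E-E'$ one at a time while maintaining throughout the invariant that at every vertex the vectors assigned so far to its incident edges are linearly independent. By hypothesis this invariant holds at the outset, when the only assigned edges are those of $E'$. Since $H$ is $d$-regular, each vertex has exactly $d$ incident edges, so a linearly independent set of $d$ vectors on the edges at a vertex is automatically a basis of $Z_2^d$; hence it suffices to reach a state in which every edge is assigned and the invariant still holds.

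Order the edges of $E-E'$ arbitrarily as $e_1,\dots,e_m$ and process them in turn. Suppose I am about to assign a vector to $e_j=\{x,y\}$ and the invariant holds for all previously assigned edges. At $x$ the already assigned incident vectors are linearly independent; since $e_j$ itself is one of the $d$ edges at $x$, there are at most $d-1$ of them, so they span a subspace $U_x\subseteq Z_2^d$ of dimension at most $d-1$, whence $|U_x|\le 2^{d-1}$. Likewise the previously assigned vectors at $y$ span a subspace $U_y$ with $|U_y|\le 2^{d-1}$. If I pick $v(e_j)$ outside $U_x\cup U_y$, then appending it keeps the incident vectors at $x$ independent (a new vector preserves independence exactly when it lies outside the span of the others) and similarly at $y$, so the invariant is maintained and no other vertex is affected.

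It remains to verify that such a vector always exists, and this is the single point requiring a small observation rather than a crude bound. Bounding naively, $|U_x|+|U_y|$ could be as large as $2^{d-1}+2^{d-1}=2^d=|Z_2^d|$, which would leave no room. The saving point is that $U_x$ and $U_y$ are subspaces and therefore both contain the zero vector, so
\[
|U_x\cup U_y|\le |U_x|+|U_y|-|U_x\cap U_y|\le 2^{d-1}+2^{d-1}-1=2^d-1<2^d .
\]
Hence $Z_2^d\setminus(U_x\cup U_y)$ is nonempty and a valid choice of $v(e_j)$ is always available. Carrying this through $e_1,\dots,e_m$ completes the assignment with the invariant intact, so at every vertex the $d$ incident vectors are linearly independent and thus form a basis, as required. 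I expect the tight counting in the displayed inequality to be the only real obstacle; the rest is a routine greedy induction.
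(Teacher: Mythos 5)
Your proof is correct and follows essentially the same greedy argument as the paper: assign vectors to the edges of $E-E'$ one at a time, noting that the spans at the two endpoints cannot exclude all of $Z_2^d$. Your counting via $|U_x \cup U_y| \leq 2^{d-1}+2^{d-1}-1 = 2^d-1$ (both subspaces share the zero vector) is just a repackaging of the paper's count of at most $2(2^{d-1}-1) < 2^d-1$ forbidden nonzero vectors.
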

\begin{proof}
Define the required vectors greedily in an arbitrary order,
maintaining the property that the vectors assigned to all edges
incident with any vertex are linearly independent. When we have to
assign a vector to an edge $uu'$ there are at most 
$2^{d-1}-1$ nonzero vectors that are forbidden in order to ensure
that the vectors assigned to edges incident with $u$ will stay
independent. Similarly, $u'$ forbids at most $2^{d-1}-1$
nonzero vectors. Since $2 (2^{d-1}-1)<2^d-1$
there is always a way to choose 
a vector that can be assigned to $uu'$ maintaining the required
property. This completes the proof.
\end{proof}
We also need the following immediate consequence of Petersen's
Theorem.
\begin{lemma}
\label{l23}
For every even $k$ and every $d \geq k$, any $d$-regular graph 
contains a spanning
subgraph in which every degree is either $k$ or $k-1$
\end{lemma}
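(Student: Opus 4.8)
The plan is to deduce the statement from Petersen's $2$-factor theorem, which asserts that every $2r$-regular graph decomposes into $r$ edge-disjoint $2$-factors (spanning $2$-regular subgraphs). Write $k=2m$; since $k$ is even this $m$ is an integer, and the hypothesis $d\ge k$ gives $m\le d/2$. First I would dispose of the case of even $d$, say $d=2r$. Here Petersen's theorem decomposes $H$ into $2$-factors $F_1,\dots,F_r$, and since $m\le r$ the union $F_1\cup\cdots\cup F_m$ is a spanning subgraph in which \emph{every} degree equals $2m=k$. Note that in this case the value $k-1$ never occurs: the sole purpose of the weaker conclusion ``degree $k$ or $k-1$'' is to accommodate odd $d$.

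The main obstacle is the odd case $d=2r+1$, where Petersen's theorem does not apply directly and $H$ need not contain a perfect matching that one could delete in order to reduce to an even-regular graph. To get around this I would manufacture an even-regular auxiliary graph by a doubling trick: take two disjoint copies $H_a,H_b$ of $H$, on vertex sets $V_a=\{v_a:v\in V\}$ and $V_b=\{v_b:v\in V\}$, and add the perfect matching $M=\{v_av_b:v\in V\}$ joining corresponding vertices. The resulting graph $\tilde H$ is $(2r+2)$-regular, hence of even degree, so Petersen's theorem decomposes it into $r+1$ two-factors. Since $k\le d=2r+1$ and $k$ is even we have $m=k/2\le r<r+1$, so I may select $m$ of these $2$-factors and let $G'$ be their union, a $2m$-regular spanning subgraph of $\tilde H$.

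Finally I would restrict $G'$ to a single copy and check the degrees at the boundary, which is the one point requiring care. Each vertex $v_a$ is incident with exactly one matching edge, namely $v_av_b$, and that edge lies in precisely one of the $r+1$ two-factors of $\tilde H$; its $2m$ incident edges in $G'$ therefore consist of edges inside $V_a$ together with \emph{at most one} matching edge. Deleting the matching edges when passing to $G'[V_a]$ thus lowers the degree of $v_a$ by either $0$ or $1$, leaving degree exactly $2m$ or $2m-1$. Hence $G'[V_a]$ is a spanning subgraph of $H_a\cong H$ in which every degree is $k$ or $k-1$, and transporting it back along the isomorphism $H_a\cong H$ yields the required subgraph of $H$. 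It is exactly this ``loss of at most one matching edge per vertex'' that produces the permitted slack between $k$ and $k-1$.
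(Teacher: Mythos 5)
Your proof is correct, and its skeleton matches the paper's: handle even $d$ by extracting $k/2$ edge-disjoint $2$-factors via Petersen's theorem, and reduce odd $d$ to the even case by a matching augmentation whose removal costs each vertex at most one edge --- which is exactly where the slack between $k$ and $k-1$ comes from in both arguments. The one genuine difference is the augmentation in the odd case. The paper adds a perfect matching to $H$ \emph{on the same vertex set}, repeating existing edges if necessary; this forces it to invoke Petersen's theorem for multigraphs (which is fine, since the Eulerian-orientation proof of Petersen works verbatim with parallel edges) and implicitly uses that $n$ is even when $d$ is odd, so that a perfect matching on $V$ exists at all. Your doubling construction $\tilde H$ sidesteps both points: $\tilde H$ is always a simple graph, and the matching between the two copies exists regardless of the parity of $n$ (though that parity is automatic here). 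The price is a slightly longer construction and the boundary-degree check when restricting to one copy, which you carry out correctly: each $v_a$ meets exactly one crossing edge, lying in exactly one of the $r+1$ factors, so the restriction drops its degree by $0$ or $1$. In short, your route is a clean simple-graph variant of the paper's multigraph shortcut, trading one line of proof for freedom from multigraph conventions.
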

\begin{proof}
If $d$ is even this follows by a repeated application of 
Petersen's Theorem \cite{Pe} 
that asserts that any $d$-regular (multi)graph contains 
a $2$-factor. If $d$ is odd, add to it a perfect matching
(repeating existing edges if needed), apply the previous case
to the resulting graph, and remove the edges of the added matching 
chosen to the spanning subgraph.
\end{proof}
The main technical lemma we need for the proof of Theorem
\ref{t11} will be established using the (asymmetric) Lov\'asz
Local Lemma, which we state next.
\begin{lemma}[The Lov\'asz Local Lemma, c.f., \cite{AS}, Chapter 5]
\label{l29}
Let $A_i, i \in I$ be a finite collection of events in an arbitrary
probability space. A dependency graph for these events is a graph
$D$
whose set of vertices is the events $A_i$, where each event
is mutually independent of all the events that are its
non-neighbors. Let $D$ be such a dependency graph, and 
let $N(A_i)$ denote the set of all neighbors of $A_i$ in $D$.
Suppose that
for each event $A_i$ there is a real number $x_i \in [0,1)$ so that
for each $i \in I$
$$
\mbox{Prob}(A_i) \leq x_i \prod_{j \in I, A_j \in N(A_i)}(1-x_j)
$$
Then, with positive probability none of the events $A_i$ occurs.
\end{lemma}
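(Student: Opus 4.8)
The plan is to prove, by induction on $|S|$, the stronger quantitative statement that for every event $A_i$ and every subset $S \subseteq I \setminus \{i\}$ one has
\[
\Pr\Big(A_i \,\Big|\, \bigwedge_{j \in S} \overline{A_j}\Big) \leq x_i .
\]
Granting this, the conclusion follows immediately: fixing any ordering of $I$ and telescoping,
\[
\Pr\Big(\bigwedge_{i \in I}\overline{A_i}\Big) = \prod_{i=1}^{|I|}\Pr\Big(\overline{A_i}\,\Big|\,\bigwedge_{j<i}\overline{A_j}\Big) \geq \prod_{i=1}^{|I|}(1-x_i) > 0 ,
\]
where each factor is at least $1-x_i$ by the claim, and the right-hand side is strictly positive because every $x_i \in [0,1)$. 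This yields the theorem.

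For the induction, the base case $|S| = 0$ is exactly the hypothesis $\Pr(A_i) \leq x_i \prod_{j \in N(A_i)}(1-x_j) \leq x_i$. For the inductive step I would split the conditioning according to the dependency graph $D$, writing $S_1 = S \cap N(A_i)$ for the neighbors and $S_2 = S \setminus S_1$ for the non-neighbors, and expand
\[
\Pr\Big(A_i \,\Big|\, \bigwedge_{j \in S}\overline{A_j}\Big) = \frac{\Pr\big(A_i \wedge \bigwedge_{j \in S_1}\overline{A_j} \mid \bigwedge_{k \in S_2}\overline{A_k}\big)}{\Pr\big(\bigwedge_{j \in S_1}\overline{A_j} \mid \bigwedge_{k \in S_2}\overline{A_k}\big)} .
\]
For the numerator I would drop the $\overline{A_j}$ factors and use that $A_i$ is mutually independent of $\{A_k : k \in S_2\}$ (its non-neighbors), so the numerator is at most $\Pr(A_i) \leq x_i \prod_{j \in N(A_i)}(1-x_j)$. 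For the denominator I would enumerate $S_1 = \{j_1,\dots,j_r\}$, factor the conjunction as $\prod_\ell \Pr(\overline{A_{j_\ell}} \mid \bigwedge_{m<\ell}\overline{A_{j_m}} \wedge \bigwedge_{k\in S_2}\overline{A_k})$, and apply the induction hypothesis to each factor, bounding it below by $1 - x_{j_\ell}$. Since $S_1 \subseteq N(A_i)$, this makes the denominator at least $\prod_{j \in N(A_i)}(1-x_j)$, and the two estimates cancel to give exactly $x_i$.

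The step I expect to need the most care is the denominator estimate: the induction hypothesis must be invoked separately for each of the $r$ factors, and one must verify that every conditioning set appearing there has size strictly smaller than $|S|$ so that the induction is legitimate, and that the product of the lower bounds over $S_1$ dominates the product over all of $N(A_i)$ (which holds because the omitted factors $1 - x_j$ lie in $(0,1]$). A minor technical point is to handle conditionings on events of probability zero; this can be sidestepped either by restricting attention to the well-defined conditional probabilities that actually arise in the telescoping product, or by a routine degenerate-case argument.
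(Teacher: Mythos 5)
The paper does not prove this lemma at all --- it is quoted as a known result from \cite{AS} --- and your argument is precisely the standard inductive proof given in that reference: the strengthened claim $\Pr\bigl(A_i \mid \bigwedge_{j \in S}\overline{A_j}\bigr) \leq x_i$ proved by induction on $|S|$, the split of the conditioning set into neighbors and non-neighbors of $A_i$, and the final telescoping product; all steps check out, including the size count for the conditioning sets in the denominator. Your zero-probability caveat is handled most cleanly by noting that the same induction yields $\Pr\bigl(\bigwedge_{j \in S}\overline{A_j}\bigr) \geq \prod_{j \in S}(1-x_j) > 0$ for every $S$, so every conditional probability appearing in the argument is well defined.
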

We can now state and prove the main lemma,
in which the constants $1000$ and $8$ can be easily improved.
\begin{lemma}
\label{l24}
Let $H=(V,E)$ be a $d$-regular graph on a set $V$ of $n$ vertices,
where $d \geq 1000$.
Suppose that 
for every connected induced subgraph of $H$ on a set $W$ of 
$w \leq n/2$ 
vertices, there are at least $w(8 \log d+2)$ edges connecting $W$ to
its complement. Then there is a spanning subgraph $H'=(V,E')$ of $H$
in which the degree of every vertex is at least 
$d -3 \log d-2$ and an assignment of a vector
$v(e) \in Z_2^d$ for every edge $e \in E'$ so that the following
two properties hold.
\begin{enumerate}
\item
For every vertex $u \in V$, the set of vectors $v(e)$ assigned
to the edges of $H'$ incident with $u$ is linearly independent.
\item
For every integer $w$, $2 \leq w \leq n/2$, and for every
set $W$ of $w$ vertices so that the induced subgraph of $H$
on $W$ is connected (in $H$), the set of 
vectors $v(e)$ for the vectors $e$ in $H'$ that connect
$W$ to its complement span $Z_2^d$.
\end{enumerate} 
\end{lemma}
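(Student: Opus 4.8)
The plan is to obtain $H'$ by deleting a sparse subgraph from $H$, assign random vectors to the surviving edges, and then use the Lov\'asz Local Lemma (Lemma \ref{l29}) to show that with positive probability both properties hold at once. First I would fix an even integer $k$ with $3\log d \le k \le 3\log d+2$ and apply Lemma \ref{l23} to extract a spanning subgraph $H_0$ of $H$ in which every degree is $k$ or $k-1$; setting $E'=E\setminus E(H_0)$ and $H'=(V,E')$ then guarantees $\deg_{H'}(u)\ge d-k\ge d-3\log d-2$ for every $u$. I would assign to each edge $e\in E'$ an independent uniform random vector $v(e)\in Z_2^d$ and introduce two families of bad events: for each vertex $u$ the event $A_u$ that the (at most $d-k+1$) vectors on $E'$-edges at $u$ are linearly dependent, and for each connected $W$ with $2\le w=|W|\le n/2$ the event $B_W$ that the vectors on the $E'$-edges of the cut $(W,V\setminus W)$ fail to span $Z_2^d$. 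Writing $N_W$ for the number of $E'$-edges leaving $W$, short estimates give $\mathrm{Prob}(A_u)\le 2^{1-k}$ (the chance that at most $d-k+1$ random vectors of a $d$-dimensional space are dependent) and $\mathrm{Prob}(B_W)\le 2^{\,d-N_W}$ (a union bound over the fewer than $2^d$ hyperplanes that could contain the cut vectors).

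The geometric heart of the argument is a lower bound on $N_W$, proved in two regimes. For $2\le w\le d/2$ I would use only regularity: a connected $W$ has at most $\binom{w}{2}$ internal edges, so its cut in $H$ has at least $dw-w(w-1)=w(d-w+1)$ edges, and since $H_0$ has maximum degree $k$ at most $kw$ of these are deleted, leaving $N_W\ge w(d-w+1-k)$. For $d/2<w\le n/2$ I would invoke the expansion hypothesis, which gives at least $w(8\log d+2)$ cut edges in $H$ and hence $N_W\ge w(8\log d+2-k)\ge 5w\log d$. A direct concavity check then shows that in both regimes $N_W\ge d+3w\log(ed)$ once $d\ge 1000$; this is the inequality that makes the events $B_W$ controllable, forcing $\mathrm{Prob}(B_W)$ to decay in $w$ faster than the number of candidate sets grows.

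For the Local Lemma I would take weights $x_{A_u}=2^{2-k}$ and $x_{B_W}=(ed)^{-2w}$, together with the standard fact that the number of connected vertex sets of size $w$ meeting a fixed vertex is at most $(ed)^{w}$. The dependencies are local: $A_u$ is adjacent only to the $A_{u'}$ with $u'$ a neighbor of $u$ and to those $B_W$ whose cut contains an edge at $u$ (forcing $W$ to meet the closed neighborhood of $u$), while $B_W$ is adjacent to events indexed by sets or vertices incident to its cut, of which there are at most $2dw$. Summing weights over a neighborhood then gives, via $\sum_{w\ge 2}(ed)^{-w}\le 2(ed)^{-2}$, a total of order $w/d$ for a $B_W$-neighborhood and of order $1/d$ for an $A_u$-neighborhood, so $\prod_j(1-x_j)\ge e^{-O(w/d)}$. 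Combined with $N_W\ge d+3w\log(ed)$ and $\mathrm{Prob}(A_u)\le 2^{1-k}$, each of the two Local Lemma inequalities $\mathrm{Prob}\le x\prod_j(1-x_j)$ reduces to an elementary inequality that I would verify for $d\ge 1000$. The main obstacle is exactly the neighborhood sum for $B_W$ when $w$ is large: it grows linearly in $w$ (up to order $n$), so $\prod_j(1-x_j)$ can be as small as $e^{-cw/d}$; the saving grace is that for such $w$ the bound $N_W\ge 5w\log d$ makes $\mathrm{Prob}(B_W)$ at most about $d^{-5w}$, which absorbs this loss with room to spare. Once the Local Lemma produces an assignment avoiding every $A_u$ and every $B_W$, properties 1 and 2 hold by definition, completing the proof.
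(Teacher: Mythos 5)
Your proposal is correct and follows essentially the same route as the paper's proof: fixing degrees near $d-3\log d$ via Lemma \ref{l23} (you delete a sparse subgraph of degree about $3\log d$ rather than extracting the dense one directly, which is the same maneuver), assigning uniform random vectors to the edges of $H'$, bounding the events $A_u$ and $B_W$ with a union bound over the $2^d$ vectors $z$ and the $(ed)^w$ count of connected sets, and applying the asymmetric Local Lemma with weights polynomially small in $d$ for the vertex events and exponentially small in $w$ for the cut events. Your regime split at $w=d/2$ instead of $d/5$ and your slightly different constants ($N_W\geq d+3w\log(ed)$ versus the paper's $d+4w\log d$, weights $(ed)^{-2w}$ versus $d^{-3w}$) are cosmetic, and the quantitative checks you sketch do go through for $d\geq 1000$.
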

\begin{proof}
Let $k$ be the smallest
even number which is at least $d -3 \log d-1$. By Lemma 
\ref{l23} there is a spanning subgraph  $H'=(V,E')$ of $H$
in which every degree is either $k$ or $k-1$. Thus every degree
of $H'$ is at least $d-3 \log d-2$ and at most
$d-3 \log d$.

Fixing this subgraph $H'$, choose for every edge $e$ of
$H'$, independently, a vector $v(e) \in Z_2^d$ 
uniformly at random among all $2^d$ vectors of 
$Z_2^d$ (including the $0$ vector, to simplify the computation).
To complete the proof we show, using the 
asymmetric Lov\'asz Local Lemma  (Lemma \ref{l29} above)
that with positive probability this random choice satisfies
the properties stated in the lemma. We proceed with the details.

For each vertex $u \in V$, let $A(u)$ denote the event that the 
set of random vectors $v(e)$  assigned to the edges of $H'$
incident with $u$ are not linearly independent. 

For every integer $w$, $2 \leq w \leq n/2$, and for every
set $W$ of $w$ vertices of $H$ such that the induced subgraph of 
$H$ on $W$ is connected (in $H$), let $B(W)$ denote the event that the 
set of 
vectors $v(e)$ for the edges $e$ in $H'$ that connect
$W$ to its complement does not span $Z_2^d$. If $|W|=w$ call 
$B(W)$ an event of type $w$.

It is not difficult to upper bound the probabilities of these events.
For each vertex $u \in V$, 
\begin{equation}
\label{e21}
\mbox{Prob}(A(u)) \leq 2^{-3 \log d}.
\end{equation} 
Indeed, let the edges of $H'$ incident with $u$ 
be $e_1, e_2, \ldots ,e_s$, where the numbering is arbitrary.
Then $s \leq d-3 \log d$. The probability that the vector
$v(e_i)$ lies in the span of the previous vectors
$v(e_1), \ldots ,v(e_{i-1})$ is at most $2^{i-1}/2^d$. The required
estimate follows by summing over all $i$, using the fact that 
$s \leq d-3 \log d$.

Next we show that for every event $B(W)$ of type $w$
\begin{equation}
\label{e22}
\mbox{Prob}(B(W)) \leq 2^{-4 w \log d}.
\end{equation} 
It is convenient to split the possible values of $w$ into 
two ranges. If $2 \leq w \leq d/5$ then every vertex of
$W$ has at least $d-w+1 > 0.8 d$ neighbors in $H$ that do not lie in
$W$. 
Among these edges, at most
$3 \log d +2$ edges incident with each vertex of $W$ belong to
$H$ and not to $H'$, implying that there are at least
$w(0.8d-3 \log d-2)$ edges of $H'$ connecting vertices of $W$
to its complement. 
For every nonzero vector $z \in Z_2^d$, the probability that 
all vectors $v(e)$ corresponding to these edges are orthogonal
to $z$ is at most $2^{-w (0.8d-3 \log d-2)} \leq
2^{-(d+4w \log d)}$, where here we used the fact that
for $d \geq 1000$ and $w \geq 2$,
$$
w (0.8d-3 \log d-2) \geq d+4w \log d.
$$
The desired estimate for this case 
follows by the union bound over all $2^d-1<2^d$ choices for the
vector $z$.

If $d/5 \leq w \leq n/2$ then, by
assumption, there are at least $w (8\log d+2)$ edges of $H$ connecting
$W$ and its complement. Among these edges, at most
$3 \log d +2$ edges incident with each vertex of $W$ belong to
$H$ and not to $H'$, implying that there are at least
$5w \log d$ edges of $H'$ 
that connect $W$ and its complement.
For every nonzero vector $z \in Z_2^d$, the probability that 
all vectors $v(e)$ corresponding to these edges are orthogonal
to $z$ is at most $2^{-5 w \log d} \leq 2^{-(d+4w \log d)}$
where the last inequality holds since for $w \geq d/5$
and $d \geq 1000$,
$5w \log d \geq d+4w \log d$.
The desired estimate
follows, as in the previous case,
by the union bound over all $2^d-1<2^d$ choices for the
vector $z$.

In order to apply the local lemma we need to define a dependency
graph $D$ for all events $A(u),B(W)$. To do so we need the known
fact (c.f., e.g., \cite{Al1}) that for every graph with maximum
degree $d$, for every integer $w$, and for every vertex $u$
of the graph, the number of sets of $w$ vertices that contain 
$u$ and induce a connected subgraph is smaller than
$(ed)^w$. Note that each event $A(u)$ is determined by the random
vectors assigned to the edges of $H'$ incident with
it. Similarly, each event $B(W)$ is determined by the random
vectors assigned to the edges of $H'$ connecting $W$ and its
complement. It is clear that the graph on the events in which
two events are connected iff the edge sets whose vectors determine
them intersect is a dependency graph.

It follows that  each event $A(u)$ is independent of
all other events besides at most $d$ other events $A(u')$ and 
besides at
most $d(ed)^w <d^{2w}$ events $B(W)$ of type $w$, for each $2 \leq
w \leq n/2$.
Similarly, each event $B(W)$ of type $w$ 
is independent of all other events besides at most
$wd$ events of the form $A(u)$ and at most 
$wd(ed)^r < w d^{2r}$ events $B(W')$ corresponding to sets 
$W$ of size $r$, for every $2 \leq r \leq n/2$.  This is because
the set of all edges of $H'$ connecting a vertex in $W$ with one in
its complement
cover at most $w$ vertices of $W$ and 
less than $(d-1)w$ vertices of its complement, and each such vertex
lies in at most $(ed)^r$ subsets of size $r$ that induce a
connected subgraph of $H$.

To apply the local lemma define, for each event $A(u)$, a real
$x_u=2^{-2 \log d}=\frac{1}{d^2}$. For each event $B(W)$ 
of type $w$, define
$x_W=2^{-3w \log d}=\frac{1}{d^{3w}}.$ 
To complete the proof using Lemma \ref{l29} it
remains to check the following inequalities.
\begin{enumerate}
\item
For every vertex $u$,
\begin{equation}
\label{e23}
\mbox{Prob}(A(u)) \leq \frac{1}{d^2} (1-\frac{1}{d^2})^d
\prod_{w \geq 2}(1-\frac{1}{d^{3w}})^{d^{2w}}
\end{equation}
\item
For every event $B(W)$ of type $w$
\begin{equation}
\label{e24}
\mbox{Prob}(B(W)) \leq \frac{1}{d^{3w}} (1-\frac{1}{d^2})^{dw}
\prod_{r \geq 2}(1-\frac{1}{d^{3r}})^{wd^{2r}}.
\end{equation}
\end{enumerate}
Inequality (\ref{e23}) follows from (\ref{e21}) and the fact that
$$
(1-\frac{1}{d^2})^d
\prod_{w \geq 2}(1-\frac{1}{d^{3w}})^{d^{2w}}
\geq (1-\frac{1}{d}) \prod_{w \geq 2} (1-\frac{1}{d^w})
\geq 1-\sum_{w \geq 1} \frac{1}{d^w} >1/2>1/d.
$$
Inequality (\ref{e24}) follows from (\ref{e22}) and the fact that
$$
(1-\frac{1}{d^2})^{dw}
\prod_{r \geq 2}(1-\frac{1}{d^{3r}})^{wd^{2r}}
\geq e^{-2w/d} \prod_{r \geq 2} e^{-2w/d^r}=
e^{-2w\sum_{r \geq 1} 1/d^r} \geq e^{-w } > d^{-w}.
$$
This completes the proof of the lemma.
\end{proof}
The proof of the main result, Theorem \ref{t11}, follows quickly
from the assertion of the previous lemmas, as shown next.
\begin{proof}
Let $H=(V,E)$ be a graph satisfying the assumptions of Theorem \ref{t11}
with, say, $c=100$. Note that by the assumption 
$100 \log d \leq d$ implying that $d \geq 1000$.
(It is worth noting also that the proof works, as it is, for 
$c=9$ and the additional assumption that $d \geq 1000$.)
By Lemma \ref{l21} it suffices to show that there is an assignment
of a vector $v(e) \in Z_2^d$ for every edge $e \in E$, so that the 
vectors assigned to the edges of any cut $(S,V-S)$ 
of $H$ span $Z_2^d$. Since any cut contains all edges of a cut in which
at least one side induces a connected subgraph of at most half the
vertices, it suffices to ensure that for every such cut the vectors 
assigned to its edges span $Z_2^d$. By Lemma \ref{l24} there is a
spanning subgraph $H'$ of $H$ and an assignment of a vector in
$Z_2^d$ to each of its edges so that the conclusion of Lemma
\ref{l24} hold. Lemma \ref{l22} ensures that this partial
assignment of vectors can be completed to an assignment of
vectors $v(e) \in Z_2^d$ for every edge $e$ of $H$ so that
the vectors assigned to the edges of any $1$-vertex cut form a
basis. The vectors assigned to the edges of any other cut still span, of
course, $Z_2^d$, as they contain all vectors of edges of $H'$ that 
belong to the cut, which span $Z_2^d$. This completes the proof.
\end{proof}

\section{Graphs admitting only smaller codes}

In this section we observe that there are natural classes of
$d$-regular graphs $H$ with edge connectivity $d$ so that
$m(H)$ is strictly smaller than $2^d$ (and is in fact
at most half of that).
This follows from the following simple lemma.
\begin{lemma}
\label{l31}
Let $H=(V,E)$ be a graph, and let $E_1,E_2, \ldots ,E_s$
be sets of edges of $H$, where each $E_i$ contains at most
$t$ edges. Suppose that for every $1 \leq i<j \leq s$,
the set of edges $E_i \cup E_j$ disconnects $H$, that is,
$H-(E_i \cup E_j)$ is disconnected. 
If $s > (2^t+1)2^{t-1}$ then
$m(H) \leq 2^t$.
\end{lemma}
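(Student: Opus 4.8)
The plan is to translate the hypothesis about the disconnecting sets $E_i \cup E_j$ into a purely combinatorial condition on the members of a connectivity code, and then to finish with a pigeonhole counting argument that yields the precise threshold $(2^t+1)2^{t-1}$. We may assume $H$ is connected, since otherwise no spanning subgraph is connected, $m(H) \le 1$, and there is nothing to prove.

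First I would record the basic feature of connectivity codes already used in the introduction: if $\GG$ is a connectivity code and $G, G' \in \GG$ are distinct, then $G \oplus G'$ is a connected spanning subgraph, and hence it contains an edge of every cut $(S,V-S)$ with $S$ a nonempty proper subset of $V$. Now associate to each member $G \in \GG$ the tuple $(G \cap E_1, \dots, G \cap E_s)$, whose $i$-th coordinate is a subset of $E_i$. Since $|E_i| \le t$, each coordinate takes at most $2^t$ distinct values.

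The key step is to show that any two distinct members $G, G'$ of $\GG$ agree in at most one coordinate of this tuple. Suppose instead that $G \cap E_i = G' \cap E_i$ and $G \cap E_j = G' \cap E_j$ for some $i \ne j$; then $G \oplus G'$ is disjoint from $E_i \cup E_j$. By hypothesis $H - (E_i \cup E_j)$ is disconnected, so there is a nonempty cut $(S, V-S)$ all of whose edges lie in $E_i \cup E_j$. Consequently $G \oplus G'$ contains no edge of this cut, so it is disconnected, contradicting the code property. This is the heart of the argument: the conceptual content is recognizing that ``$E_i \cup E_j$ disconnects $H$'' forces the symmetric difference of two code members to differ on at least one of $E_i, E_j$.

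Finally I would run the counting. Suppose for contradiction that $m(H) \ge 2^t + 1$, and fix $2^t + 1$ distinct members of a maximum code. In each coordinate $i$ these $2^t+1$ members take values in a set of size at most $2^t$, so by pigeonhole some two of them agree in coordinate $i$; choose one such agreeing pair as a witness for $i$. Since any pair of members agrees in at most one coordinate, distinct coordinates must receive distinct witnesses, so the assignment $i \mapsto (\text{witness pair})$ is injective into the set of unordered pairs of members. Hence $s \le \binom{2^t+1}{2} = (2^t+1)2^{t-1}$, contradicting the assumption $s > (2^t+1)2^{t-1}$, and therefore $m(H) \le 2^t$. I expect the only delicate points to be the reduction to a connected $H$ and verifying that the witness map is genuinely injective, which is exactly where the ``at most one common coordinate'' property is used; the identity $\binom{2^t+1}{2} = (2^t+1)2^{t-1}$ shows the constant in the statement is tight for this method, so no slack is wasted.
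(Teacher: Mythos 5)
Your proof is correct and is essentially the paper's own argument: both rest on the same double pigeonhole, first finding for each $i$ a pair of code members agreeing on $E_i$, then using $s > \binom{2^t+1}{2} = (2^t+1)2^{t-1}$ to force two coordinates to share a witness pair, whose symmetric difference then avoids $E_i \cup E_j$ and is hence disconnected. Your rephrasing via ``any two code members agree in at most one coordinate'' and injectivity of the witness map is just the contrapositive of the paper's presentation (and your reduction to connected $H$ is harmless but unnecessary, since a spanning subgraph of the disconnected graph $H-(E_i\cup E_j)$ is automatically disconnected).
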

\begin{proof}
Let $\GG$ be a family of $2^t+1$ spanning subgraphs of $H$. We have to show
that it must contain two distinct members whose symmetric
difference is disconnected. By the pigeonhole principle,
for each fixed $i$, $1 \leq i \leq s$, there is a pair 
$\{G_1^{(i)},G_2^{(i)}\}$  of distinct members of $\GG$ 
that have the same intersection with the set $E_i$.
By a second application of the pigeonhole principle, since
$s> {{|\GG|} \choose 2}$, there are distinct $i,j$ so that
$$\{G_1^{(i)},G_2^{(i)}\}=\{G_1^{(j)},G_2^{(j)}\}.$$
Therefore, the symmetric difference of these two graphs
does not contain any edge of $E_i \cup E_j$ and is thus
disconnected.
\end{proof}
We next describe two families of regular graphs for which the above lemma
implies that the maximum size of a connectivity code is
strictly smaller than the trivial bound that follows from
the edge-connectivity.

The (Cartesian, or box) product $H_1 \times H_2$ of two graphs 
$H_1=(V_1,E_1)$ and $H_2=(V_2,E_2)$ is the graph whose vertex
set is the set $V_1 \times V_2$ where $(v_1,v_2)$ and $(u_1,u_2)$
are connected iff either $v_1=u_1$ and $ v_2,u_2$ are connected in
$H_2$ or $v_2=u_2$ and $v_1,u_1$ are connected in $H_1$.
\begin{prop}
\label{p32}
For every clique $K_t$ and cycle $C_s$, where $s>(2^t+1)2^{t-1}$,
the graph $H_{t,s}$ is $(t+1)$-regular and  its edge connectivity
is $t+1$, but $m(H_{t,s}) \leq 2^t$.
\end{prop}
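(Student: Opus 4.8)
The plan is to realize $H_{t,s}$ as the Cartesian product $K_t \times C_s$ and then invoke Lemma \ref{l31} with a family of cuts coming from the cycle direction. First I would fix the structure explicitly: take the vertex set to be $[t] \times \mathbb{Z}_s$, so that $H_{t,s}$ consists of $s$ disjoint copies (``layers'') of $K_t$, one for each $a \in \mathbb{Z}_s$, where consecutive layers $a$ and $a+1$ (indices mod $s$) are joined by the perfect matching $M_a = \{(i,a)(i,a+1) : i \in [t]\}$ of $t$ edges. Each vertex $(i,a)$ then has $t-1$ neighbours inside its own layer and $2$ neighbours in the two adjacent layers, so $H_{t,s}$ is $(t+1)$-regular, and its minimum degree gives immediately $k'(H_{t,s}) \le t+1$. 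For the matching lower bound $k'(H_{t,s}) \ge t+1$ I would either quote the standard fact that the edge connectivity of a Cartesian product is at least the sum $k'(K_t)+k'(C_s) = (t-1)+2$ of the edge connectivities of the factors, or argue it directly.

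The core of the proof is the choice of cuts for Lemma \ref{l31}. For each $a \in \mathbb{Z}_s$ set $E_a = M_a$, the matching between layers $a$ and $a+1$; thus $|E_a| = t$ for every $a$, and there are exactly $s$ such sets. The key observation is that removing any two distinct matchings $M_a$ and $M_b$ disconnects $H_{t,s}$: the only edges joining distinct layers are matching edges, so deleting $M_a$ and $M_b$ amounts to deleting two edges of the underlying cycle $C_s$, which splits it into two nonempty arcs of layers with no edge running between them. Hence $E_a \cup E_b$ disconnects $H_{t,s}$ for every $a \neq b$, and the family $E_0, \ldots, E_{s-1}$ satisfies the hypotheses of Lemma \ref{l31} with parameter $t$. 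Since $s > (2^t+1)2^{t-1}$ by assumption, Lemma \ref{l31} yields $m(H_{t,s}) \le 2^t$, completing the proof.

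I expect the only delicate point to be the edge-connectivity lower bound $k'(H_{t,s}) \ge t+1$, which is not actually needed for the code bound itself but is part of the statement; everything else is a routine verification. In particular, the disconnection claim rests only on the trivial remark that all inter-layer edges lie in the matchings $M_a$, so that cutting the cycle at two distinct points genuinely separates the product into two parts.
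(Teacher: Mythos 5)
Your proposal is correct and matches the paper's proof essentially verbatim: the paper also establishes $(t+1)$-regularity directly, derives the edge-connectivity lower bound from the same fact that $k'(H_1 \times H_2) \geq k'(H_1)+k'(H_2)$ (alternatively from vertex transitivity), and applies Lemma \ref{l31} to exactly the same family of $s$ inter-layer matchings $M_a$, each of size $t$, any two of which disconnect the graph. Your added remark that all inter-layer edges lie in the matchings, so deleting two of them cuts the cycle of layers into two nonempty arcs, is the same (routine) disconnection argument the paper leaves implicit.
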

\begin{proof}
It is clear that $H_{t,s}$ is $(t+1)$-regular. The fact that its
edge connectivity is also $t+1$ is a consequence of the known
fact that the edge connectivity of a product is at least the sum of
the edge connectivities of the two factors (see, e.g.,
\cite{CS}). It also follows from the well known fact that the edge
connectivity of any connected, $d$-regular, vertex transitive graph
is $d$ (see \cite{Ma} or \cite{GR}, pp. 38-39).
The upper bound for $m(H_{t,s})$ follows from 
Lemma \ref{l31} in which the $s$ sets $E_i$ are the sets of edges
connecting the vertices of $H_{t,s}$ in which the second coordinate
is vertex number $i$ of the cycle $C_s$ and the vertices 
in which the second coordinate is vertex number $i+1$ of the cycle
(where addition is modulo $s$).
\end{proof}
Since $K_3=C_3$, a special case of the above result is that for
all $s>36$, the maximum possible cardinality of a connectivity 
code for the torus $C_3 \times C_s$ satisfies
$m(C_3 \times C_s) \leq 8$. This answers a question
raised in \cite{BGMW}, (although the problem of determining
$m(C_t \times C_s)$ for all $t,s$ remains open.)
\begin{prop}
\label{p33}
For $s>4$ let $C_s^{(2)}$ denote the graph obtained from the cycle
$C_s$ by connecting any two vertices of distance at most 
$2$ in the cycle. Then $C_s^{(2)}$ is $4$-regular, its
edge-connectivity is $4$, and if $s>6$ then
$m(C_s^{(2)}) \leq 2^{3}=8$
\end{prop}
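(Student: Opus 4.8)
The plan is to treat the three assertions separately, putting almost all the effort into the bound $m(C_s^{(2)})\le 8$. Write the vertex set as $\mathbb{Z}_s$, so that vertex $i$ is adjacent exactly to $i\pm 1$ and $i\pm 2$. For $s>4$ these are four distinct vertices, so $C_s^{(2)}$ is $4$-regular. The graph is connected and vertex-transitive (the rotation $i\mapsto i+1$ is an automorphism), so by the fact used in Proposition \ref{p32} that a connected $d$-regular vertex-transitive graph has edge-connectivity exactly $d$ (see \cite{Ma} or \cite{GR}), its edge-connectivity equals its degree $4$.

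For the upper bound I would invoke Lemma \ref{l31} with $t=3$. For each $i\in\mathbb{Z}_s$ let $E_i=\{\,(i,i+1),\,(i-1,i+1),\,(i,i+2)\,\}$ be the three edges crossing the gap between $i$ and $i+1$, so $|E_i|=3$. The key claim is that $E_i\cup E_j$ disconnects $C_s^{(2)}$ for every $i\ne j$. Cutting at the two gaps $i$ and $j$ splits $\mathbb{Z}_s$ into the nonempty arcs $A=\{i+1,\dots,j\}$ and its complement; since every edge has length at most $2$, any edge joining $A$ to its complement must straddle the gap at $i$ or at $j$, and a short check (using $s>6$ so the relevant indices are distinct) shows each such edge lies in $E_i\cup E_j$. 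In the degenerate case $j=i+1$ the arc is the single vertex $i+1$, and $E_i\cup E_{i+1}$ already contains all four edges of the vertex-cut $F_{i+1}$, so $i+1$ is isolated. Hence all pairwise unions disconnect.

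The difficulty I expect to be the main obstacle is the threshold. Lemma \ref{l31} with $t=3$ needs more than $(2^3+1)2^{3-1}=36$ such sets, so this argument as stated only gives $m(C_s^{(2)})\le 8$ for $s>36$, whereas we want $s>6$. The natural reformulation is to record each subgraph $G$ by the word $\phi(G)=(G\cap E_i)_{i\in\mathbb{Z}_s}$ over an alphabet of size $2^3=8$: the disconnection property forces any two codewords of a connectivity code to agree in at most one coordinate, since agreement on both $E_i$ and $E_j$ would leave $G\oplus G'$ free of the cut $E_i\cup E_j$. But a plain double count (at least one agreeing pair per coordinate, against at most $\binom{9}{2}=36$ agreeing pairs in total) merely recovers $s>36$, and the Singleton bound only yields $m\le 8^2$.

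To reach $s>6$ one must exploit the overlaps special to $C_s^{(2)}$, which is where I expect the real work to lie. Consecutive cuts share the length-$2$ edge $(i,i+2)$, so the coordinates of $\phi$ are far from independent; moreover each single vertex $v$ already carries the $4$-edge cut $F_v$, which forces the codewords to differ in every vertex coordinate as well. I would therefore analyse the XOR difference graph directly: a difference $G\oplus G'$ must be a connected spanning subgraph, which, on $C_s^{(2)}$, means it has at most one fully empty transition cut and no isolated vertex. Combining the transition-cut constraint with the vertex-cut constraint and tracking the three edge-types should tighten the count enough to exclude a code of size $9$ as soon as $s\ge 7$. Carrying out this combined accounting cleanly — rather than appealing to Lemma \ref{l31} as a black box — is the delicate step.
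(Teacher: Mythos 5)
Your argument is exactly the paper's: the three-edge sets you define are precisely its sets $E_f$ (the edges of $C_s^{(2)}$ whose unique shortest path in $C_s$ uses the cycle edge $f$), the disconnection check --- including the adjacent-gap case where a vertex becomes isolated --- is the same, and the bound $m(C_s^{(2)})\le 8$ is obtained by citing Lemma \ref{l31} with $t=3$. The threshold difficulty you flag is real, but you should know that the paper does not resolve it either: its proof of Proposition \ref{p33} consists of nothing beyond the appeal to Lemma \ref{l31}, whose hypothesis with $t=3$ requires more than $(2^3+1)2^{2}=36$ cut sets, i.e.\ $s>36$ --- consistent with the abstract's ``square of a \emph{long} cycle'' and with the explicitly stated range $t>36$ for the torus $C_3\times C_t$ in Proposition \ref{p32}. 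So the refined accounting you sketch in your final paragraph (exploiting the shared edge $(i,i+2)$ of consecutive cuts and the four-edge vertex cuts to push the threshold down to $s>6$) is not carried out in the paper; stopping at Lemma \ref{l31}, as you did, reproduces the paper's complete intended proof, and your observation in fact isolates a discrepancy between the proposition's stated hypothesis $s>6$ and the range $s>36$ that its proof actually delivers.
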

\begin{proof}
It is clear that $C_s^{(2)}$ is $4$-regular. The fact that it is
$4$-edge connected (and in fact even $4$ (vertex) connected) is
known, c.f., e.g., \cite{BM} pp 48-49 and also follows from
vertex transitivity. In order to prove an upper
bound for $m(C_s^{(2)})$ we apply Lemma \ref{l31}. For each edge 
$f$ of the cycle $C_s$, let $E_f$  denote the set of all edges
$\{u,v\}$ of the graph $C_s^{(2)}$ for which the unique shortest
path in $C_s$ connecting $u$ and $v$ includes the edge $f$.
Then $ |E_f|=3$ for each of the $s$ edges $f$ of $C_s$. It is
easy to check that for any two distinct $f,f'$, 
$E_f \cup E_{f'}$ disconnects $C_s^{(2)}$. The   required upper
bound for $m(C_s^{(2)})$ thus follows from Lemma \ref{l31}.
\end{proof}
\section{Concluding remarks and open problems}
\begin{itemize}
\item
An $(n,d,\lambda)$-graph $H$ is a $d$-regular graph on $n$ vertices in
which the absolute value of each nontrivial eigenvalue is at most
$\lambda$. It is well known that if $\lambda$ is significantly
smaller than $d$ then any such $H$ has strong expansion properties.
In fact, it suffices to assume that the second largest eigenvalue
of $H$ is at most $\lambda$ (with no assumption about
the most negative eigenvalue). A simple result proved in \cite{AM}
is that for any set $W$ of $w \leq n/2$ vertices of a $d$-regular graph
on $n$ vertices in which the second largest eigenvalue is at most
$\lambda$ there are at least $w(d-\lambda)/2 $ edges connecting
$W$ and its complement. Theorem \ref{t11} thus implies that if
$\lambda \leq d-2c \log d$ then $m(H)=2^d$. Note that this is a
pretty mild assumption on $\lambda$, as it is known that for
every $d$ there are infinitely many bipartite $d$-regular 
graphs in which the second largest eigenvalue is at most
$2 \sqrt{d-1}$, see \cite{MSS}.
\item
Our proof of Theorem  \ref{t11} provides a linear connectivity
code of maximum possible cardinality for any graph $H$ satisfying
the assumptions. It will be interesting to decide if there are
interesting examples of graphs $H$ for which non-linear connectivity codes 
can be larger than linear ones. We note that the  
code of maximum possible cardinality for the complete graph
$K_n$ described in \cite{AGKMS} is linear, and so is the code
of maximum possible cardinality 
for $C_3 \times C_3$ given in \cite{BGMW}.
\item
It may be interesting to study the computational problem of 
computing or estimating $m(H)$ for a given input graph $H$.
As mentioned in the introduction, $m(H)$ is always at most
$2^{k'(H)}$, where $k'(H)$ is the edge connectivity of $H$.
On the other hand, $m(H)$ is always at least
$2^{\lfloor k'(H)/2 \rfloor}$. This is because an immediate 
consequence
of the known result of Nash-Williams about
packing edge-disjoint tress in graphs \cite{NW} implies that $H$ has
at least $k=\lfloor k'(H)/2 \rfloor$ pairwise edge disjoint 
spanning trees $T_i$. The collection of all $2^k$ unions 
$\cup_{i\in I} E(T_i)$ of the edge sets of any subset $I$ of these 
trees is a (linear) connectivity code for $H$. Since $k'(H)$
can be computed in polynomial time, this supplies an 
efficient algorithm for approximating the logarithm of
$m(H)$ up to a factor of (roughly) $2$. 
\item
By the remark in the previous comment, the 
smallest possible value of
$m(H)$ for a graph $H$ with an even edge connectivity $k'=2k$
is at least $2^k$.  It is not too difficult to give examples
showing that this is tight. Indeed, let $s$ be an integer,
$s > 2^{k-1}(2^k+1)$, 
and let $H=H(s,k)$ be a graph obtained
from the vertex disjoint union of $s$ cliques
$K(0), K(1), \ldots K(s-1)$, each of size $2k+1$, by adding a matching
$M_i$ of $k$ edges between $K(i)$ and $K(i+1)$, for all
$0 \leq i \leq s-1$, where $K(s)=K(0)$. 
It is worth noting that by choosing the matchings $M_i$
appropriately we can ensure that the graph is nearly regular, that is,
every degree in it is either
$2k$ or $2k+1$.
It is easy to see
that the edge connectivity of this graph is $2k$. Indeed,
deleting less than $2k$ edges leaves each of the cliques
$K(i)$ connected and leaves at least one edge of every matching
$M_i$ besides at most one, keeping the graph
connected. Thus the edge connectivity is at least
$2k$ and thus $m(H) \geq 2^k$. On the other hand, any
union of two
of the matchings $M_i$ disconnects $H$ and hence
the edge connectivity is exactly $2k$. In addition, by Lemma
\ref{l31}, $m(H) \leq 2^k$ and therefore
$m(H)=2^k.$ 
\item
For $n > d \geq 2$ with $nd$ even, let $f(n,d)$ denote the maximum
possible value of $m(H)$ where $H$ ranges over all $d$-regular graphs
on $n$ vertices. Then $f(n,d) \leq 2^d$ for every $n$, and
the complete graph $K_{d+1}$ shows that $f(d+1,d)=2^d$. 
It is more interesting
to study $f(d)$ defined as the
largest $f$ so that there are infinitely many $d$-regular
graphs $H$ satisfying $m(H) = f(d)$. By Theorem \ref{t11} there exists
an absolute constant $d_0$ so that $f(d)=2^d$ for every $d \geq d_0$.
In the proof given here we have made no attempt to optimize the value of
$d_0$, it is possible that the above holds for all $d \geq 4$
(though our proof here would certainly not give it even if
optimized).
On the other  hand $f(d) < 2^d$ for $d \in \{2,3\}$. Indeed, for
$d=2$ the only connected $2$-regular graph is a cycle. If the number of
its vertices is $3$, namely it is the triangle $K_3$, then 
$m(K_3)=2^2=4$, showing that $f(3,2)=4$. On the other hand,
for all $n>3$, $f(n,2)=2$. The upper bound is the special case
of Proposition \ref{p32} with $t=1$, and the lower bound follows
from the trivial code consisting of the edgeless subgraph and the whole
cycle. For $d=3$, if $H$ is a $3$-regular graph on $n$ vertices, then the
symmetric difference of any two members in  a connectivity code
for $H$ must contain at least $n-1$ edges. As $n-1$ is odd and as $H$
has $3n/2$ edges, the Plotkin  bound (see \cite{Pl} or \cite{MS})
implies that the size of the code is at most
$2 \lfloor \frac{n}{2(n-1)+1-3n/2} \rfloor$. If the number of vertices
$n$ exceeds $6$, this provides an upper bound of $4$ for
$m(H)$. Thus $f(3)\leq 4$. To see that this is an equality and that
in fact $f(2n,3)=4$ for every  odd $n>3$, consider the
following cubic bipartite graph $H_n$.
Its two vertex classes  are $A=\{a_0, a_1, \ldots,
a_{n-1}\}$ and $B=\{b_0, b_1, \ldots ,b_{n-1}\}$. The edges
consist of three matchings $M_0,M_1,M_2$, where $M_j$ consists of all
edges $a_ib_{i+j}$, $0 \leq i \leq n-1$, with $i+j$ computed modulo
$n$. The $4$ members of a connectivity code for $H_n$ are the edgeless
subgraph, and all the three unions of two of the matchings 
$M_i$. This is a linear code, and the symmetric difference of any two
distinct members of it is the union of two of the matchings
$M_i$. It is easy to check that each such union is a Hamilton
cycle and hence connected. 

Recall that a Ramanujan $d$-regular graph is a $d$-regular graph
in which the absolute value of each nontrivial eigenvalue is at most
$2 \sqrt {d-1}$.  By Theorem \ref{t11} it follows that for any such
Ramanujan graph $H$, $m(H) =2^d$ provided $d$ is at least some
$d_0$. It may be interesting to decide if this holds with
$d_0=4$.
\end{itemize}
\vspace{0.2cm}

\noindent
{\bf Acknowledgment:}\,
I thank Jie Ma for helpful discussions.

\end{document}